\renewcommand\div{{\rm div\,}}
\newcommand\sgn{{\rm sgn\,}}
\newtheorem{thm}{Theorem}
\title{Augmented data for the Backus problem}
\author{Dmitry Glotov}
\date{}
\begin{document}

\bibliographystyle{plain}
\begin{abstract}
Backus~\cite{Backus68} considered a boundary value problem for the Laplace equation with the non-linear data in the form of the magnitude $|Du|$ of the gradient of the solution $u$. We consider this problem with the data expanded by $(\partial/\partial\nu)|Du|$ given on the boundary of the domain. To justify the requirement for additional data, we use them to estimate the number of sources for the related inverse source problem in the plane. We show that, for an arbitrary dimension, a harmonic function satisfies a quasi-linear equation on the boundary of the domain with the coefficients involving the augmented data.
We use the finite element method to recover the harmonic function on the boundary by solving numerically the derived equation. 
\end{abstract}
\maketitle

\section{Introduction}\label{sec:intro}
Backus~\cite{Backus68} considered the problem of determining a harmonic function $u$
given the magnitude $|Du|$ of the gradient.
This problem has geophysical applications in gravimetric and geomagnetic (intensity) surveys. For example in magnetometry, in the absence of currents and away from singularities, Maxwell's equation for the magnetic field reduces to
the Laplace equation. 
The intensity of a magnetic or gravitational field is more readily available than the full field data, which also includes the field direction. Other potential applications can be found in medicine, specifically in magnetoencephalography and magnetocardiography~\cite{MEG}. Traditionally in these fields, the full magnetic field is measured using superconducting quantum interface devices (SQUID). These devices are bulky and their use is associated with a high cost of acquisition and maintenance while {\em scalar} magnetometers, which measure only the magnitude of the magnetic field, are in comparison compact, scalable, and capable of achieving a high level of sensitivity.

The focus of Backus' paper is on the data in the form of $|Du|$ given on the boundary of the domain.
Backus considered the problem for the cases of a bounded domain, an unbounded domain with the bounded exterior, and the half-plane. For the exterior of a sphere in $\mathbb R^3$, Backus proved~\cite{Backus70} that the problem admits multiple solutions and additional data are required for well-posedness.
In two dimensions, the problem for bounded domains has a unique solution~\cite{Backus68} provided we are given the locations of all zeros of $Du$ in $\Omega$ and the direction of $Du$ at one point in $\Omega$.
This additional requirement results in a well-posed problem, however the data of this type may be viewed as exogenous.
Another example of expanded data is the requirement of a definite sign for the normal derivative of the solution on the boundary of an unbounded domain. In this case, Backus established uniqueness under the constraint that the solution vanishes at infinity~\cite{Backus68}, which itself can be interpreted as an additional boundary condition. Cherkaeva~\cite{Cherkaeva} considered a linearized problem for the exterior of a ball in $\mathbb R^3$. She obtained uniqueness by requiring additional data in the form of the potential or its normal derivative on the {\em magnetic equator} of the sphere. The magnetic equator in this context is defined as the curve on the sphere on which the vertical component of the main field vanishes. A related result was obtained by Jorge and Magnanini~\cite{JM93}. They proved that, if $u$ and $v$ are two harmonic functions defined outside the unit ball and regular at infinity such that $|Du|=|Dv|$ on $S^2$ and $u=v$ on the set $\{x\in S^2:\ (\partial/\partial\nu)(u+v)=0\}$, then $u=v$.
Also for the exterior problem, Backus proved that, if $|Du|=|Dv|$ throughout the domain, then $u=v$. The last result sets a precedent for assuming the knowledge of $|Du|$ and hence its derivatives on a larger set in a special formulation of the problem. 

We propose an intermediate assumption, namely, we require the data in the form of $|Du|$ and its normal derivative of $(\partial/\partial\nu)|Du|$ to be given on the boundary of the domain.  
The data in the equivalent form $(p,Dp):=(|Du|,D|Du|)$ was considered by this author in~\cite{G10} for planar domains.  In the case of two independent variables, the non-linear equations relating the data $(p,Dp)$ and the solution $(Du,D^2u)$ take the form
\begin{equation}\label{eq:nonlin2d}
\begin{aligned}
p & =u_1^2+u_2^2,\\
p_1 & =2(u_{11}u_1+u_{12}u_2),\\
p_2 & =2(u_{12}u_1+u_{22}u_2),
\end{aligned}
\end{equation}
where the subscripts denote partial derivatives, i.e., $u_j=\partial u/\partial x_j$, $u_{jk}=\partial^2u/\partial x_j\partial x_k$, etc.  
These equations are equivalent to a system of linear equations for the components of the solution $(Du,D^2u)$ with the components of the data $(p,Dp)$ appearing as coefficients:
\begin{equation}\label{lin2d}
\begin{aligned}
p_1u_1-p_2u_2-2p\,u_{11} & =0,\\
p_2u_1+p_1u_2-2p\,u_{12} & =0,
\end{aligned}
\end{equation}
This system can be reduced to a system of two ordinary differential equations and further rewritten as a single ordinary differential equation in one complex variable, which simplifies the analysis and lends itself readily to reconstruction methods including analytic solution formulas and numerical computation.

The goal of the present paper is to extend the recovery of $(u,Du)$ from $(p,Dp)$ to higher dimensions.
The organization is as follows. In Section~\ref{sec:source}, 
we consider a relevant inverse source problem and revisit an estimate from~\cite{G10} on the number of sources in terms of the expanded data in the two-dimensional case. 
In the case of a general dimension $d\ge2$, we derive in Section~\ref{sec:mielofon} a non-linear elliptic equation satisfied by a harmonic function $u$ on a hyperplane with coefficients involving $|Du|$ and $(\partial/\partial\nu)|Du|$. We report in Section~\ref{sec:numer} on numerical studies of recovering $u$ (and $Du$) on $\partial\Omega$ from the knowledge of $|Du|$ and $(\partial/\partial\nu)|Du|$ (or $D|Du|$). The method is based on solving the non-linear equation derived in Section~\ref{sec:mielofon}.

In addition to the already mentioned references~\cite{Backus68,Backus70,JM93,G10}, the Backus problem was studied by D\'{\i}az, D\'{\i}az, and Otero~\cite{DDO}. These authors considered an oblique derivative problem related to the exterior Backus problem by the Kelvin transform. In particular, they proved the maximality of the solution of the exterior problem with a definite sign of the normal derivative.  Lieberman~\cite{L87} addressed the regularity of solutions for a class of nonlinear boundary value problems that includes the Backus problem on bounded domains. Payne and Schaeffer~\cite{PS03} obtained bounds for solutions of parabolic equations and their gradient with the magnitude of the gradient prescribed on the boundary of bounded domains. Kaiser and Neudert~\cite{KN04} and Kaiser~\cite{Kaiser10} considered the exterior problem with the direction of the gradient given on the boundary and the magnitude unknown. The data in the form of the magnitude of the gradient was also studied in connection with the problem of electrical impedance tomography by Nachman, Tamasan, and Timonov~\cite{NTT7,NTT9}.  These authors considered the problem of finding the conductivity $\sigma$ from the measurement of the magnitude of the current $|J|$ where the current is given by $J=\sigma Du$ and $u$ is a solution of the elliptic equation $D\cdot(\sigma Du)=0$. In addition, either the Cauchy data~\cite{NTT7} or the Dirichlet data~\cite{NTT9} are assumed to be given on the boundary.

We point out that the collection of data in the form of the gradient of $|Du|$ for a potential $u$ is already implemented in magnetic surveys in archaeological geophysics and in gravimetric surveys with the help of devices called {\em gradiometers}. These devices consist of two or more sensors forming a basis for a finite difference approximation of the gradient of the field magnitude.

\section{Estimate for the number of poles}\label{sec:source}
The recovery of $(u,Du)$ from $(p,Dp)$ on the boundary may be viewed as a first step in the identification of points sources located inside the domain given the magnitude of the field data on the boundary. The second step of finding the potential 
assuming a specific form of the solution can be classified as an inverse {\em source} problem for the Poisson equation with point sources.  

To state this problem more precisely let $\Omega$ be a domain in $\mathbb R^d$ where $d\ge2$ and consider the equation
\begin{equation}\label{Poisson}
\Delta u=f,
\end{equation}
with the source term $f$ of the form
\begin{equation}\label{f}
f=\sum_{|\alpha|\le N}\sum_{j=1}^{N_\alpha}b^\alpha_jD^\alpha\delta_{x^\alpha_j},
\end{equation}
where $\alpha=(\alpha_1,\dots,\alpha_d)\in(\mathbb N\cup\{0\})^d$ is a multiindex with
\[|\alpha|=\alpha_1+\dots+\alpha_d\quad\text{and}\quad D^\alpha=\frac{\partial^{\alpha_1}}{\partial x_1^{\alpha_1}}\dots\frac{\partial^{\alpha_d}}{\partial x_d^{\alpha_d}},\]
$N$, $N_\alpha\in\mathbb N$; $b^\alpha_j\in\mathbb R$, $x^\alpha_j\in\Omega$, for $j=1,\dots,N^\alpha$; and $\delta_x$ is the Dirac delta function supported at $x$. The problem is as follows: given the potential and its normal derivative on the boundary of the domain, find the locations $x_j^\alpha$ and moments $b_j^\alpha$ of the point sources. The problem has been  extensively studied and many results are available.


The number of point sources in the inverse source problem is often unknown.  In the two-dimensional case studied in~\cite{G10}, the necessary and sufficient conditions for the existence of solution to the ordinary differential equation arising from~\eqref{lin2d} are related to the estimation of the number of dipoles and monopoles counting their multiplicities.  
Below, we extend the estimate to the case of poles of arbitrary degrees for dimension two and make it more precise.

\begin{thm}
Suppose $\Omega$ is a smooth domain in $\mathbb R^2$ and $u$ is a solution of~\eqref{Poisson}-\eqref{f} in $\Omega$, that is,
\begin{equation}\label{potential}
u(x)=u_0(x)+\sum_{|\alpha|\le N}\sum_{j=1}^{N_\alpha}b^\alpha_jD^\alpha\Gamma(x-x^\alpha_j),
\end{equation}
where $u_0$ is a harmonic function on $\bar\Omega$, $\Gamma$ is the fundamental solution of the Laplace equation, and $x_j^\alpha\in\Omega$.  Let $p=|Du|^2$, $q=\partial p/\partial\nu$ and assume that $p>0$ on $\partial\Omega$.  Then,
\begin{equation}\label{noofsources2d}
-\frac1{2\pi}\int_{\partial\Omega}\frac q{2p}\,d\tau\le\sum_{|\alpha|\le N}(|\alpha|+1)N_\alpha,
\end{equation}
Moreover, the equality holds if $p$ does not vanish in $\Omega$.

\end{thm}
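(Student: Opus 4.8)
The plan is to express the left-hand side of~\eqref{noofsources2d} as a winding number and then count zeros. Writing $w = u_1 - iu_2$, the complex gradient, we have $p = |Du|^2 = |w|^2 = w\bar w$. Since $u$ is harmonic away from the sources $x_j^\alpha$, $w$ is holomorphic in $\Omega\setminus\{x_j^\alpha\}$, and near each $x_j^\alpha$ the term $b_j^\alpha D^\alpha\Gamma(x-x_j^\alpha)$ contributes to $w$ a pole of order $|\alpha|+1$ (since $\Gamma$ in the plane is $\frac1{2\pi}\log|x|$, so $D^\alpha\Gamma$ differentiated once more in the complex-gradient sense has a pole of order exactly $|\alpha|+1$). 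The first step is therefore to identify
\[
-\frac1{2\pi}\int_{\partial\Omega}\frac{q}{2p}\,d\tau
\]
with $-\frac1{2\pi}\int_{\partial\Omega}\partial_\nu\log p^{1/2}\,d\tau$, i.e.\ with (up to sign and a factor) the flux of $D\log|w|$ across $\partial\Omega$, and then to recognize this flux as the boundary integral of the harmonic conjugate's tangential derivative — in other words as $\frac1{2\pi}\oint_{\partial\Omega}d(\arg w)$ taken with the appropriate orientation, which is precisely the winding number of $w$ around $\partial\Omega$.

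The second step is to apply the argument principle. Since $p>0$ on $\partial\Omega$, $w$ is nonvanishing on the boundary and the winding number equals $Z - P$, where $Z$ is the number of zeros of $w$ in $\Omega$ (with multiplicity) and $P$ is the number of poles (with multiplicity). The poles are exactly at the $x_j^\alpha$, with orders $|\alpha|+1$, so
\[
P = \sum_{|\alpha|\le N}(|\alpha|+1)N_\alpha.
\]
Thus the left-hand side of~\eqref{noofsources2d} equals $P - Z = \sum_{|\alpha|\le N}(|\alpha|+1)N_\alpha - Z$, which is $\le \sum_{|\alpha|\le N}(|\alpha|+1)N_\alpha$ since $Z\ge 0$; and equality holds precisely when $Z=0$, i.e.\ when $p=|w|^2$ does not vanish in $\Omega$. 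This accounts for the ``moreover'' clause. One must also address the orientation/sign bookkeeping carefully so that the $-\frac1{2\pi}$ and the $\frac{q}{2p}$ (rather than $\frac{q}{p}$, the extra $\tfrac12$ coming from $p = |w|^2$ versus $|w|$) come out consistent with the stated inequality.

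The main obstacle I anticipate is the rigorous justification of the identification in the first step: $\log p$ is not smooth up to $\partial\Omega$ if $w$ has zeros accumulating at the boundary (excluded here by $p>0$ on $\partial\Omega$, which is good), but more importantly one needs that the boundary integral of $\partial_\nu\log|w|$ genuinely computes the winding number. The clean way is to excise small disks around each zero and pole, apply Green's identity (or just the Cauchy–Riemann equations relating $\partial_\nu\log|w|$ to $\partial_\tau\arg w$) on the resulting multiply connected region where $\log|w|$ is harmonic, and let the radii shrink; each small circle around a zero of order $m$ contributes $-m$ (or $+m$) and around a pole of order $m$ contributes the opposite sign, yielding $P-Z$. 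A secondary technical point is confirming that the singular part of $u$ coming from $b_j^\alpha D^\alpha\Gamma(x-x_j^\alpha)$ really produces a pole of order exactly $|\alpha|+1$ in $w$ and not lower order due to cancellation: this follows because $D^\alpha\Gamma$, viewed through $w=u_1-iu_2=2\,\partial u/\partial z$ up to conjugation conventions, is (a constant multiple of) $\bar z^{-1}$ differentiated, so the leading term is a nonzero multiple of $z^{-(|\alpha|+1)}$ — no cancellation occurs for a single source term, and distinct $x_j^\alpha$ contribute at distinct points.
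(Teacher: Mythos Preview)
Your proposal is correct and follows essentially the same approach as the paper: both apply the argument principle to the holomorphic complex gradient (the paper uses $f=u_y+iu_x=iw$ where $w=u_x-iu_y$ is your choice), identify the boundary integral $\frac{1}{2\pi}\int_{\partial\Omega}\frac{q}{2p}\,d\tau$ with $n_+-n_-$, and read off the pole orders $|\alpha|+1$ at each $x_j^\alpha$. The only stylistic difference is that the paper carries out the identification by a direct expansion of $\frac{1}{2\pi i}\int\frac{f'}{f}\,dz$ rather than via the Cauchy--Riemann relation $\partial_\nu\log|w|=\partial_\tau\arg w$ you propose, and the paper spells out the zero/pole correspondence through an explicit local expansion in polar coordinates rather than the $2\partial_z$ argument you sketch; both routes are standard and equivalent.
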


\begin{proof}
To obtain estimate~\eqref{noofsources2d} we apply the argument principle of complex analysis
~\cite[p.~152]{Ahlfors}.
Namely, if $f$ is a meromorphic function and $\gamma$ is a simple closed curve that does not pass through zeros or poles of $f$, then
\[\frac1{2\pi i}\int_\gamma\frac{f'}f\,dz=n_+-n_-,\]
where $n_+$ and $n_-$ are the numbers of zeros and poles of $f$, respectively, located inside the region with the boundary $\gamma$.

We apply the argument principle to the function $f=u_y+iu_x$.  First, we discuss how the poles and zeros of $u$ correspond to those of $f$. Since $u$ is harmonic away from its singularities, $f$ is meromorphic and its poles coincide with the poles of $u$.  Moreover, if $u$ has a zero or pole at $z_0=(x_0,y_0)$, then, in polar coordinates $(\rho,\theta)$ at $z_0$ and with $z=x+iy=\rho e^{i\theta}$, we claim that
\begin{itemize}
	\item for $n\neq0$, $u=\mathcal O(\rho^n)$ and $u=o(\rho^{n-1})$ as $\rho\to0$ if and only if $f(z)\sim z^{n-1}$ as $z\to z_0$;
	\item $u=\mathcal O(\log\rho)$ and $u=o(\rho^{-1})$ as $\rho\to0$ if and only if $f\sim z^{-1}$ as $z\to z_0$.
\end{itemize}
To prove the claim in one direction, let $v$ be such harmonic conjugate of $u$ that, for $F=iu-v$, we have $F(z)\sim z^{n}$ for $n\neq0$ or $F(z)\sim\log z$ as $z\to z_0$.  Then $f=F'$ and, therefore, $f(z)\sim z^{n-1}$.  For the other direction, suppose $f(z)=z^{n-1}(c_2+ic_1)+o(z^{n-1})$ for some constants $c_1$ and $c_2$, not both zero.  Then
\begin{equation}\label{Du}
\begin{aligned}
u_x & =\rho^{n-1}(c_1\cos(n-1)\theta+c_2\sin(n-1)\theta)+o(\rho^{n-1}),\\
u_y & =\rho^{n-1}(c_2\cos(n-1)\theta-c_1\sin(n-1)\theta)+o(\rho^{n-1}).
\end{aligned}
\end{equation}
We consider the cases of zeros and poles separately. In the case of a zero at $z_0$, i.\-e., for $n\ge1$, integrating along the line segment connecting $z_0$ and $z$, we have
\[u(z)=u(\rho\cos\theta,\rho\sin\theta)=\int_0^\rho u_x\cos\theta+u_y\sin\theta\,dr.\]
The substitution of~\eqref{Du} into the right-hand side of this representation yields
\begin{equation}\label{u}
u(z)=\frac{\rho^n}n\,(c_1\cos n\theta+c_2\sin n\theta)+o(\rho^n),
\end{equation}
i.\-e., $u=\mathcal O(\rho^n)$ and $u=o(\rho^{n-1})$ as $\rho\to\infty$.  In the case of a pole at $z_0$, i.\-e., for $n\le0$, suppose $f$ has no other poles in the ball of radius $r>0$ centered at $z_0$ so that $\sup_{|z-z_0|=r}u(z)<\infty$.  Integrating along the line segment connecting $z$ and $z_1=z_0+r(z-z_0)/|z-z_0|$, we have
\[u(z)=u(z_1)-\int_\rho^ru_x\cos\theta+u_y\sin\theta\,dr,\]
which, after the substitution of~\eqref{Du} into the right-hand side, becomes~\eqref{u} when $n\le-1$ and $u(z)=c_1\log\rho+o(\log\rho)$ when $n=0$ and the claim is proved. It follows in particular from this claim that $f$ has a pole of order $n+1$, for some $n\ge0$ if and only if $u\sim D^\alpha\Gamma$ with $|\alpha|=n$.

Performing the computation for the argument principle, we have
\begin{align*}
\frac1{2\pi i}\int_{\partial\Omega}\frac{f'}f\,dz & =\frac1{2\pi i}\int_{\gamma}\frac{(u_{xy}+iu_{xx})(u_y-iu_x)}{u_x^2+u_y^2}\,dz\\
& =\frac1{2\pi i}\int_{\partial\Omega}\frac12\left(\frac\partial{\partial x}\log p-i\frac\partial{\partial y}\log p\right)\cdot(dx+i\,dy)\\
& =\frac1{4\pi}\int_{\partial\Omega}\frac\partial{\partial x}\log p\,dy-\frac\partial{\partial y}\log p\,dx\\
& =\frac1{4\pi}\int_{\partial\Omega}\frac\partial{\partial\nu}\log p\,d\tau\\
& =\frac1{2\pi}\int_{\partial\Omega}\frac q{2p}\,d\tau
\end{align*}
Hence
\[-\frac1{2\pi}\int_{\partial\Omega}\frac q{2p}\,d\tau=n_--n_+\le n_-=\sum_{|\alpha|\le N}(|\alpha|+1)N_\alpha,\]
where the last identity follows from the claim above.

\end{proof}


\section{A quasi-linear equation}\label{sec:mielofon}


In this section, we show that harmonic functions satisfy a quasi-linear elliptic equation on a flat portion of the boundary of a domain. 
With this result, we reduce the linear Laplace equation to a non-linear equation but in a lower dimensional space.

To distinguish the Laplace operators in $\mathbb R^n$ and $\mathbb R^{n+1}$ we introduce the following notation:
\[\Delta_ku=\sum_{i=1}^kD_{ii}u\quad\text{and}\quad\Delta u=\Delta_nu,\]
so that the Laplace equation in $\mathbb R^{n+1}$ becomes
\begin{equation}\label{Laplace(n+1)}
\Delta_{n+1}u=\Delta u+D_{n+1,n+1}u=0.
\end{equation}

\begin{thm}
Suppose $U$ is an open set in $\mathbb R^{n+1}$ such that a portion of its boundary $\Omega\subset\{x_{n+1}=0\}\cap\partial U$ is an open set in $\mathbb R^n$. Let $u$ be a harmonic function in $U\cup\Omega$. 
Denote by $Du$ the tangential component of the full gradient of $u$ on $\Omega$, i.e., $Du=(D_1u,\dots,D_nu)$. Let
\begin{equation}\label{p_{n+1}}
p=|Du|^2+(D_{n+1}u)^2\quad\text{and}\quad q=D_{n+1}p\,.
\end{equation}
Then $u$ satisfies
\begin{equation}\label{mielofon}
\div\frac{Du}{\sqrt{p-|Du|^2}}+\frac12\frac{\sigma q}{p-|Du|^2}=0
\end{equation}
in $\Omega$, where $\div$\-is the divergence operator in $\mathbb R^n$ and $\sigma=\sgn D_{n+1}u$. 
\end{thm}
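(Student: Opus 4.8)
The plan is to prove~\eqref{mielofon} by a direct differential computation, exploiting the fact that $\Omega$ lies in the hyperplane $\{x_{n+1}=0\}$, so that the tangential derivatives $D_1,\dots,D_n$ on $\Omega$ are the restrictions to $\Omega$ of the corresponding partial derivatives of $u$, and that $u$ is smooth (indeed real-analytic) up to $\Omega$ by harmonicity. The first observation is that $p-|Du|^2=(D_{n+1}u)^2$, so $\sqrt{p-|Du|^2}=|D_{n+1}u|$, and both~\eqref{mielofon} and the quantity $\sigma$ only make sense where $D_{n+1}u\ne0$; on such a set $\sigma=\sgn D_{n+1}u$ is locally constant. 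I would therefore fix a point of $\Omega$ with $D_{n+1}u\ne0$, work on a relative neighborhood in $\Omega$ where $\sigma$ is constant, and abbreviate $w=D_{n+1}u$, so that $|D_{n+1}u|=\sigma w$ and $Du/\sqrt{p-|Du|^2}=\sigma\,Du/w$.

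Next I would expand the divergence, pulling the locally constant $\sigma$ through:
\[\div\frac{\sigma Du}{w}=\sigma\sum_{i=1}^n D_i\!\left(\frac{D_iu}{w}\right)=\sigma\left(\frac{\Delta_n u}{w}-\frac{\langle Du,Dw\rangle}{w^2}\right),\]
where $Dw=(D_1w,\dots,D_nw)$ and $\langle\cdot,\cdot\rangle$ denotes the Euclidean inner product in $\mathbb R^n$. Two identities feed into this expression. From the Laplace equation~\eqref{Laplace(n+1)}, $\Delta_n u=-D_{n+1,n+1}u=-D_{n+1}w$. From the definition of $q$ and the equality of mixed partials $D_{n+1}D_iu=D_iw$,
\[q=D_{n+1}p=D_{n+1}\Big(\sum_{i=1}^n(D_iu)^2+w^2\Big)=2\langle Du,Dw\rangle+2w\,D_{n+1}w,\]
so that $\langle Du,Dw\rangle=\tfrac12 q-w\,D_{n+1}w$.

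Substituting both identities, the terms involving $D_{n+1}w/w$ cancel, leaving
\[\div\frac{\sigma Du}{w}=\sigma\left(-\frac{D_{n+1}w}{w}-\frac{\tfrac12 q-w\,D_{n+1}w}{w^2}\right)=-\frac{\sigma q}{2w^2}.\]
Since $w^2=(D_{n+1}u)^2=p-|Du|^2$, this is exactly~\eqref{mielofon}. As the argument is pointwise and the point with $D_{n+1}u\ne0$ was arbitrary, the equation holds throughout the relatively open subset of $\Omega$ on which $D_{n+1}u\ne0$, which is precisely where~\eqref{mielofon} is defined.

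There is no substantial analytic obstacle here; the proof is essentially a bookkeeping exercise. The points requiring care are: interpreting $\div$ and the derivatives as tangential operators on the flat piece $\Omega$ (so that $\Delta_n u=-D_{n+1,n+1}u$ may be used on $\Omega$ itself, which is exactly what the hypothesis ``harmonic in $U\cup\Omega$'' provides); verifying that $\sigma$ is locally constant on the set where $D_{n+1}u\ne0$, so that it may be pulled through $\div$; and keeping the sign bookkeeping straight when rewriting $|D_{n+1}u|$ as $\sigma\,D_{n+1}u$ and $1/\sigma$ as $\sigma$.
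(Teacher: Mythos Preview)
Your proof is correct and follows essentially the same direct computation as the paper: both use $\Delta_n u=-D_{n+1,n+1}u$ together with the expansion of $q=D_{n+1}p$ to eliminate the normal derivatives. The only cosmetic difference is that the paper first derives the non-divergence form $(p-|Du|^2)\Delta u+D_iuD_juD_{ij}u-\tfrac12 D_ipD_iu+\tfrac{q}{2}\sigma\sqrt{p-|Du|^2}=0$ and then observes it is equivalent to~\eqref{mielofon}, whereas you expand the divergence form directly; the underlying algebra is the same.
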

\begin{proof}
Taking partial derivatives of the first equation in~\eqref{p_{n+1}}, we obtain
\begin{equation}\label{D_ip}
2D_{n+1}uD_{i,n+1}u=D_ip-2D_juD_{ij}u,\quad i=1,\dots,n,
\end{equation}
\[2D_iuD_{i,n+1}u+2D_{n+1}uD_{n+1,n+1}u=D_{n+1}p=q,\]
where, here and throughout, the summation from 1 to $n$ is assumed for repeated indices $i$ and $j$. Multiplying the last equation by $D_{n+1}u$ and replacing in it the terms involving the partial derivative with respect to $x_{n+1}$ using~\eqref{D_ip} and~\eqref{Laplace(n+1)} we arrive at
\[D_iu(D_ip-2D_juD_{ij}u)+2(p-(D_iu)^2)(-\Delta u)=qD_{n+1}u.\]
Rearranging the terms further, we obtain
\begin{equation}\label{non-div}
(p-|Du|^2)\Delta u+D_iuD_juD_{ij}u-\frac12D_ipD_iu+\frac q2\sigma\sqrt{p-|Du|^2}=0,
\end{equation}
the equation involving the partial derivatives with respect to $x_1,\dots,x_n$ only.  Finally, equation~\eqref{mielofon} is the divergence form of equation~\eqref{non-div}.
\end{proof}
Equation~\eqref{mielofon} is elliptic but not uniformly elliptic: the eigenvalues are $\lambda=p-|Du|^2$ and $\Lambda=p$. In particular, the ellipticity is lost where the full gradient of $u$ is tangent to the boundary, i.e., on the magnetic equator as referred to in~\cite{Cherkaeva}.

The equation has an outward resemblance to 
the minimal surface equation
\[(1+|Du|^2)\Delta u-D_iuD_juD_{ij}u=0\] 
in the divergence part of the operator.
Also equation~\eqref{mielofon} is similar to the equation of gas dynamics
\[\Delta u-\frac{D_i uD_j u}{1-\dfrac{\gamma-1}2|Du|^2}D_{ij}u=0,\]
in that the former loses ellipticity when $|Du|^2=p$ and the latter changes the type from elliptic when $|Du|<[2/(\gamma+1)]^{1/2}$ to hyperbolic when $[2/(\gamma+1)]^{1/2}<|Du|<[2/(\gamma-1)]^{1/2}$.

\section{Numerical studies}\label{sec:numer}
In our numerical experiments, we consider equation~\eqref{mielofon}
on the unit square in $\mathbb R^2$.  
The data is generated from functions of the form~\eqref{potential} that are harmonic in $\mathbb R^3$ with the exception of singularities, if any. Specifically, we choose the function $u$ from among the following choices:
\begin{itemize}
\item $u_0(x,y,z)=(x-x_0)^2+(y-y_0)^2-2(z-z_0)^2$, where
$(x_0,y_0,z_0)=(-2,-3,-2.5)$;
\item $u_1(x,y,z)=\Gamma(x-x_1,y-y_1,z-z_1)$, where
\[\Gamma(x,y,z)=-\frac1{4\pi}\frac1{\sqrt{x^2+y^2+z^2}}\]
is the fundamental solution of the Laplace equation in $\mathbb R^3$ and $(x_1,y_1,z_1)=(0.2,0.1,0.5)$;
\item $u_0+u_1$.
\end{itemize}
The function $u_0$ plays the role of a potential for the background field; the function $u_1$ is a monopole and it models a small perturbation; and $u_0+u_1$ generates the combined field.

The values of the data: $p$, $q$, $\sigma$, and of the exact solution (for boundary values and error estimates) are collected on the unit square embedded in the coordinate $xy$-plane, i.e., $\Omega=[0,1]\times[0,1]\times\{z=0\}$. We supplement equation~\eqref{mielofon} with the Dirichlet boundary conditions. Although this choice of data requires the knowledge of $u$ on $\partial\Omega$ and it is not realistic in practice, we use it to 
take advantage
of the available boundary value solver.
In this sense, the function $\sigma$ is another exogenous assumption on the data. In all of our examples, $\partial u/\partial\nu$ is sign definite, i.e., $\sigma$ is constant.

We plot the function $u$ and corresponding $|Du|^2$ and $(\partial/\partial\nu)|Du|^2$ in Figure~\ref{Fig:upq}. 
We observe that the range of values of both the potential $u$ and the magnitude of the gradient $|Du|^2$ in the perturbation is two orders of magnitude smaller than those in the background field. 
The plots of these functions in the background field and in the combined field are virtually indistinguishable by visual inspection.
The effect of the perturbation is more pronounced however for 
$(\partial/\partial\nu)|Du|^2$. This function is constant in the background field and has a local extremum near the origin in the perturbation. The combined field inherits the magnitude from the background field and the shape from the perturbation.

\begin{figure}[t]
\captionsetup[subfigure]{labelformat=empty}
	\centering
	\begin{subfigure}[t]{0.3\textwidth}
		\includegraphics[height=1.25in]{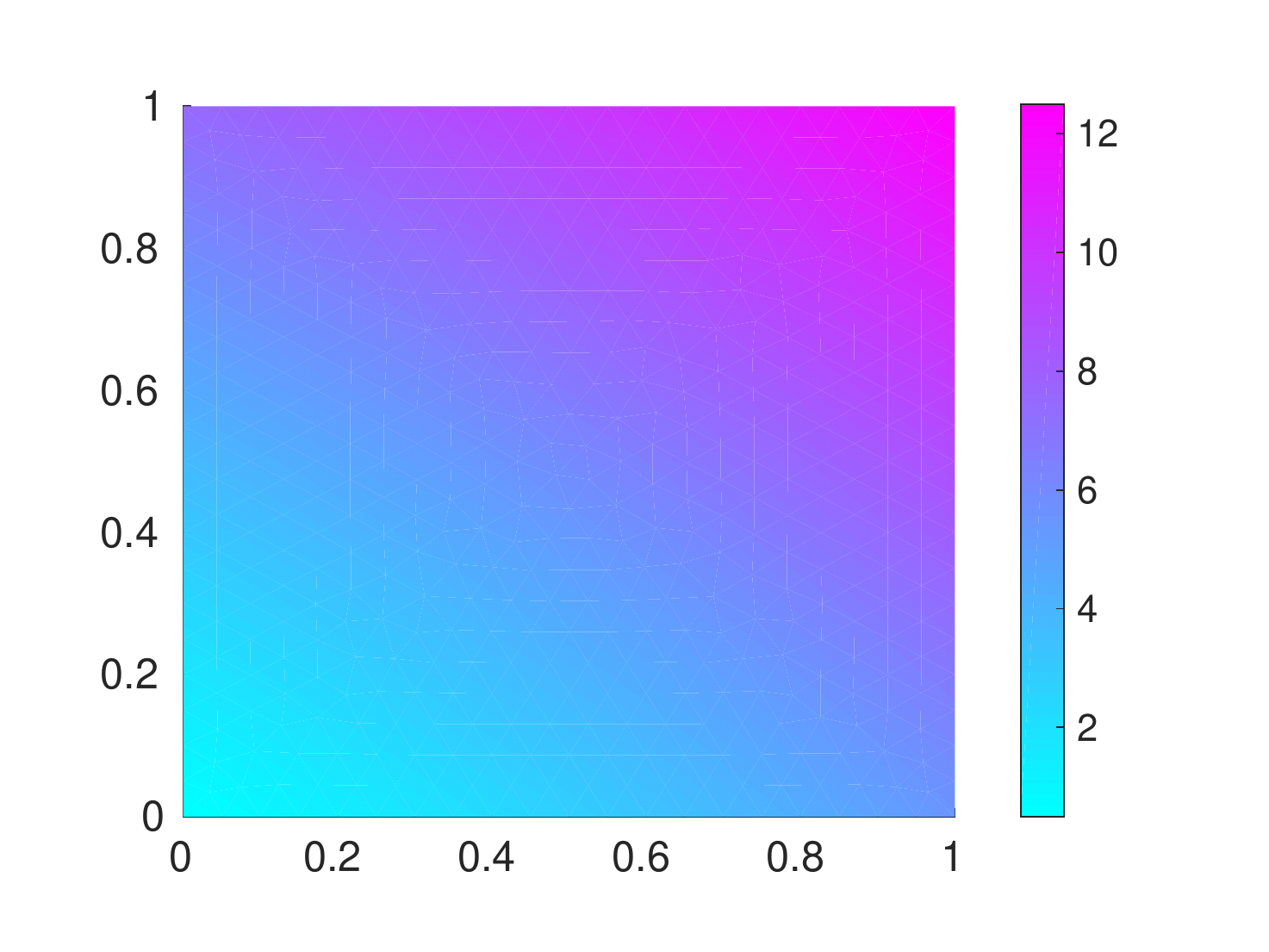}
		\caption{$u_0$}
	\end{subfigure}
	~ 
	\begin{subfigure}[t]{0.3\textwidth}
	\centering
		\includegraphics[height=1.25in]{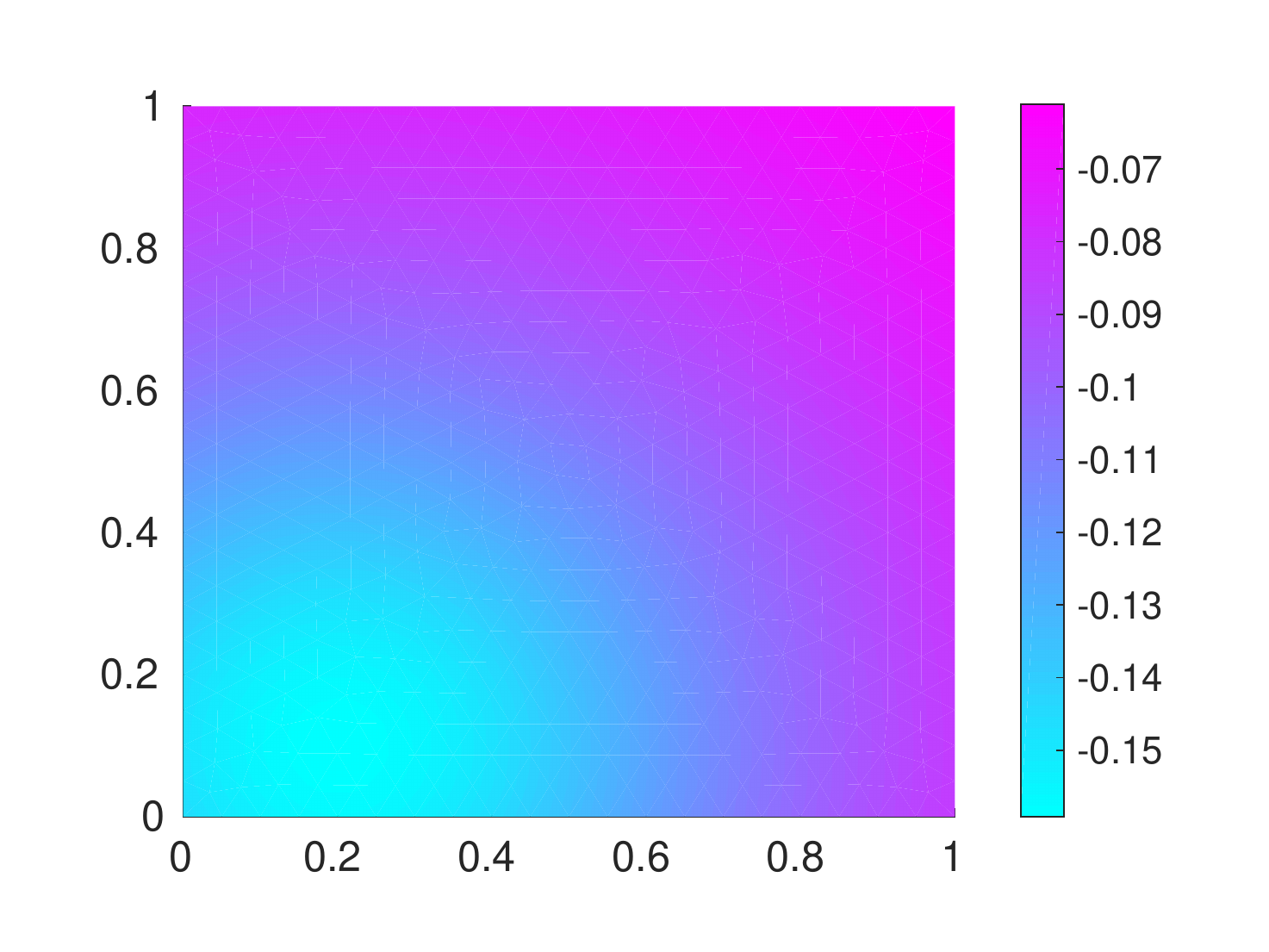}
		\caption{$u_1$}
	\end{subfigure}
	~ 
	\begin{subfigure}[t]{0.3\textwidth}
	\centering
		\includegraphics[height=1.25in]{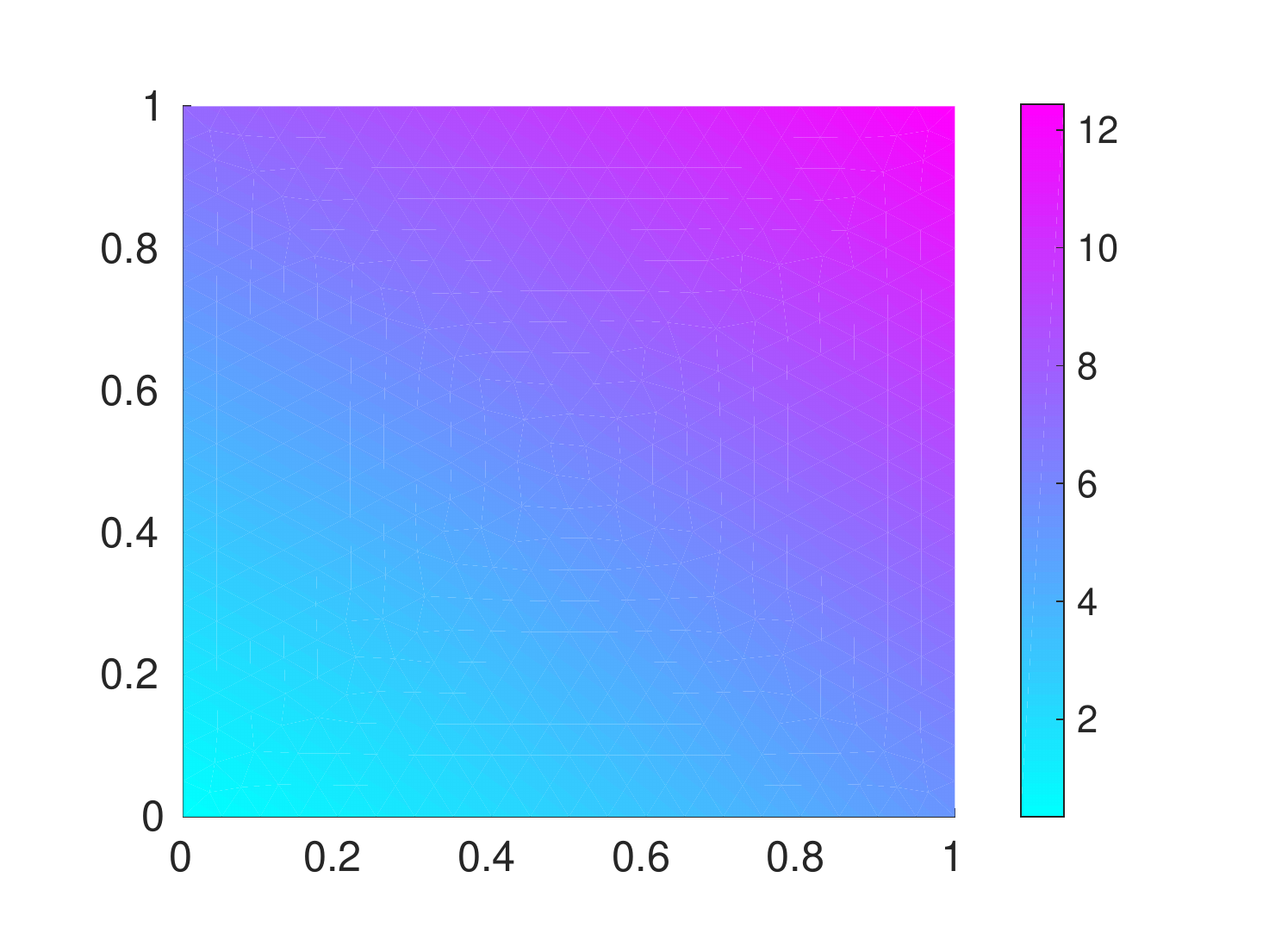}
		\caption{$u_0+u_1$}
	\end{subfigure}\\
	\begin{subfigure}[t]{0.3\textwidth}
		\includegraphics[height=1.25in]{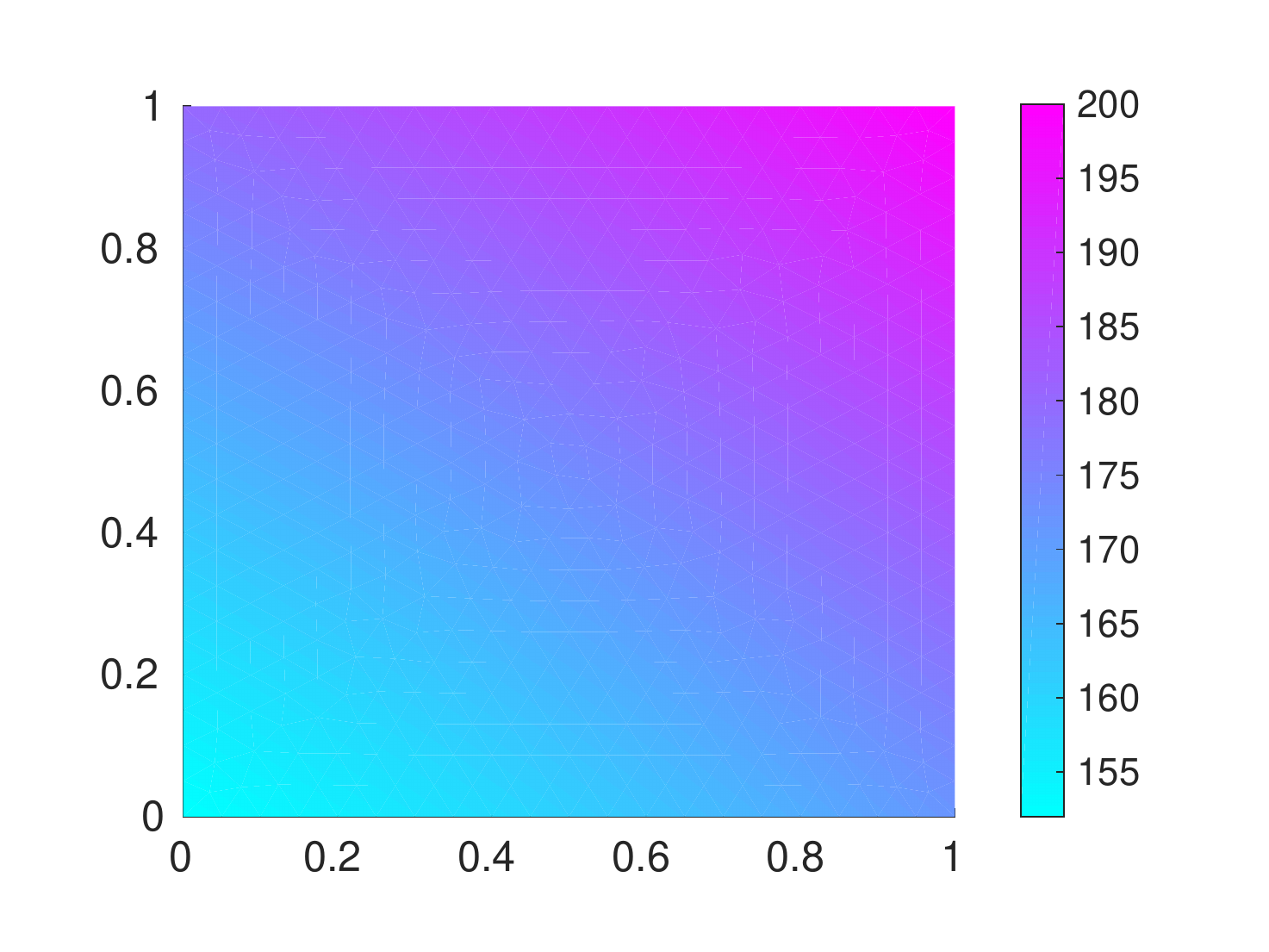}
		\caption{$|Du_0|^2$}
	\end{subfigure}
	~ 
	\begin{subfigure}[t]{0.3\textwidth}
	\centering
		\includegraphics[height=1.25in]{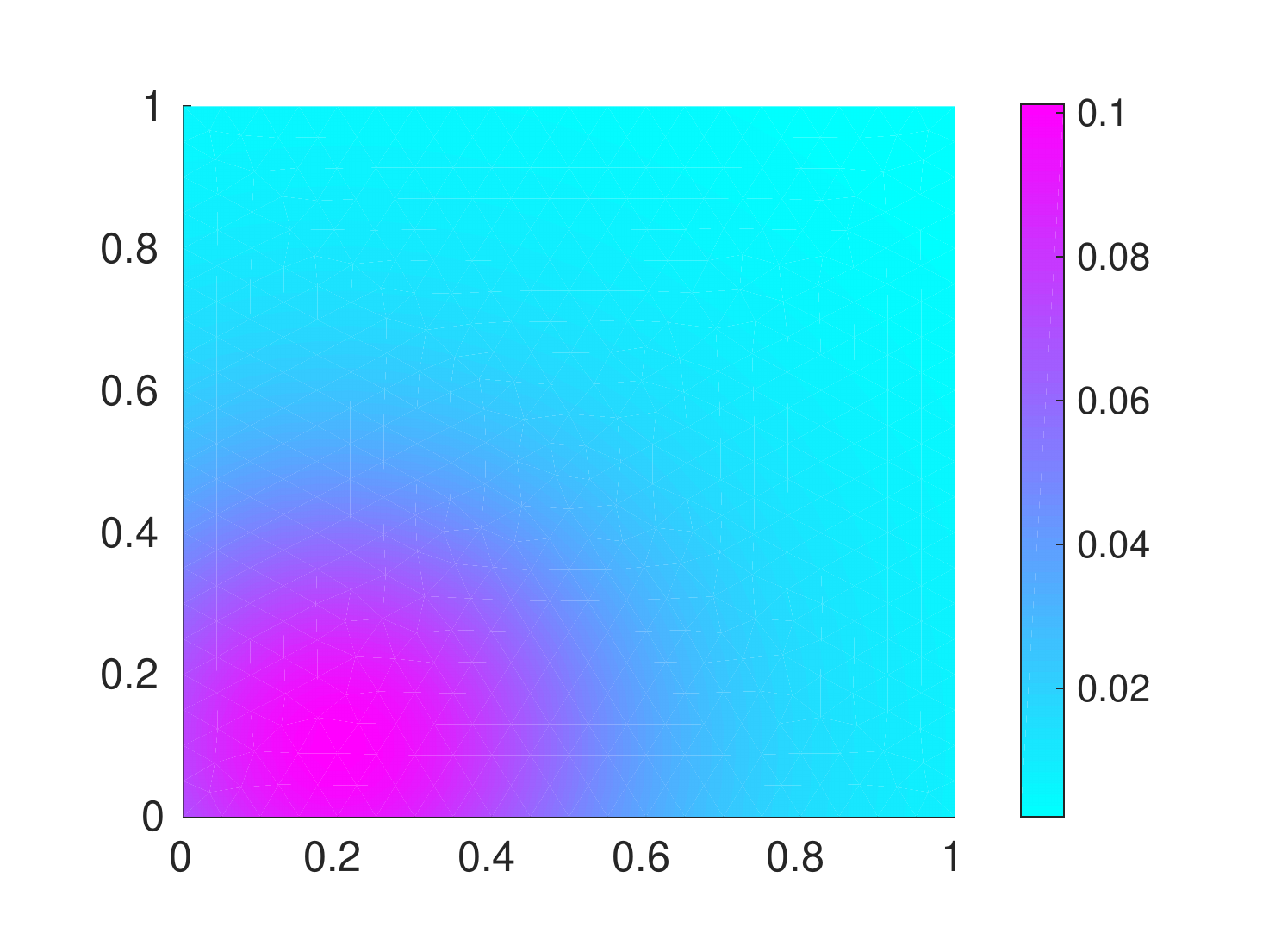}
		\caption{$|Du_1|^2$}
	\end{subfigure}
	~ 
	\begin{subfigure}[t]{0.3\textwidth}
	\centering
		\includegraphics[height=1.25in]{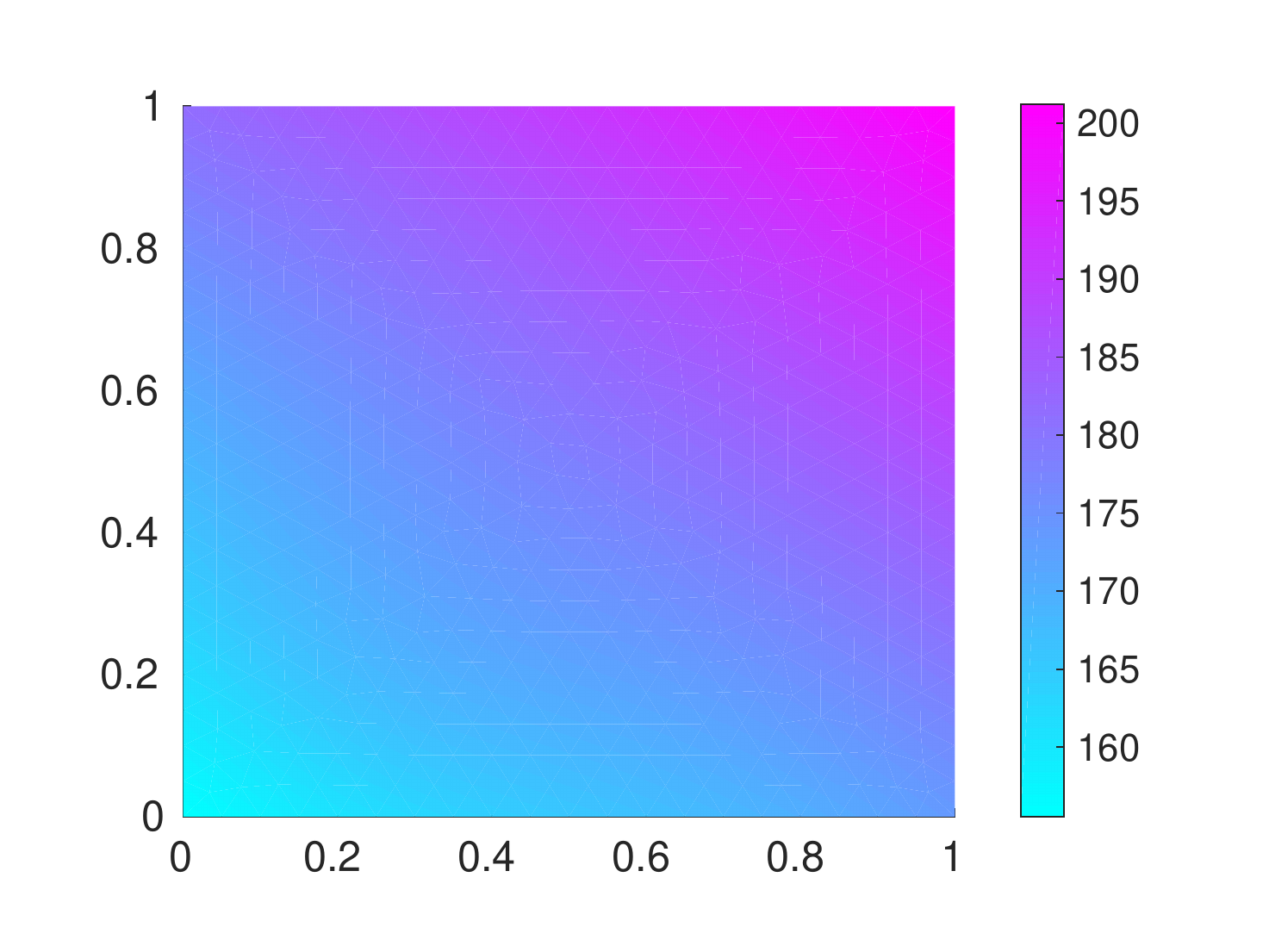}
		\caption{$|D(u_0+u_1)|^2$}
	\end{subfigure}\\
	\centering
	\begin{subfigure}[t]{0.3\textwidth}
		\includegraphics[height=1.25in]{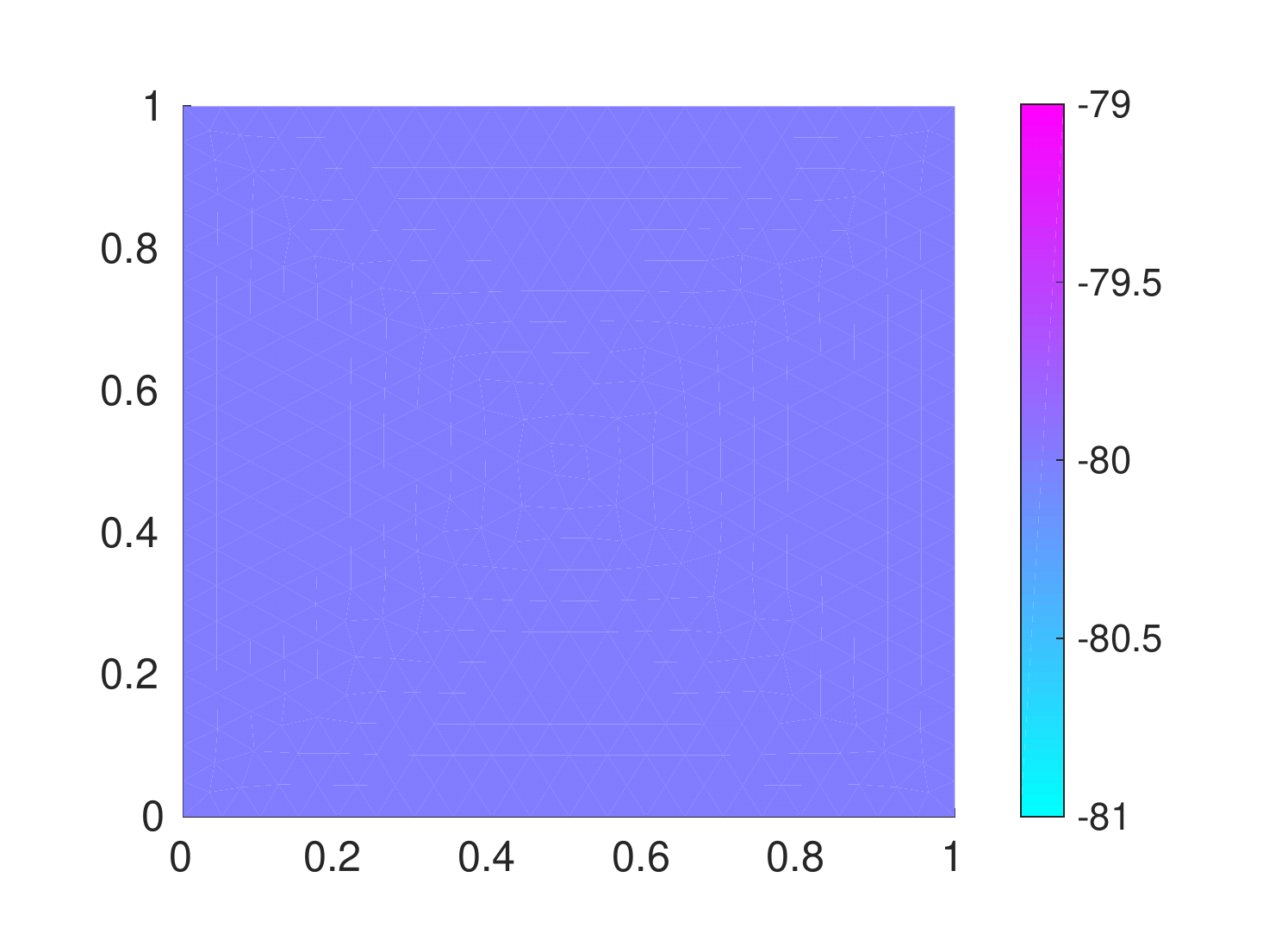}
		\caption{$(\partial/\partial\nu)|Du_0|^2$}
	\end{subfigure}
	~ 
	\begin{subfigure}[t]{0.3\textwidth}
	\centering
		\includegraphics[height=1.25in]{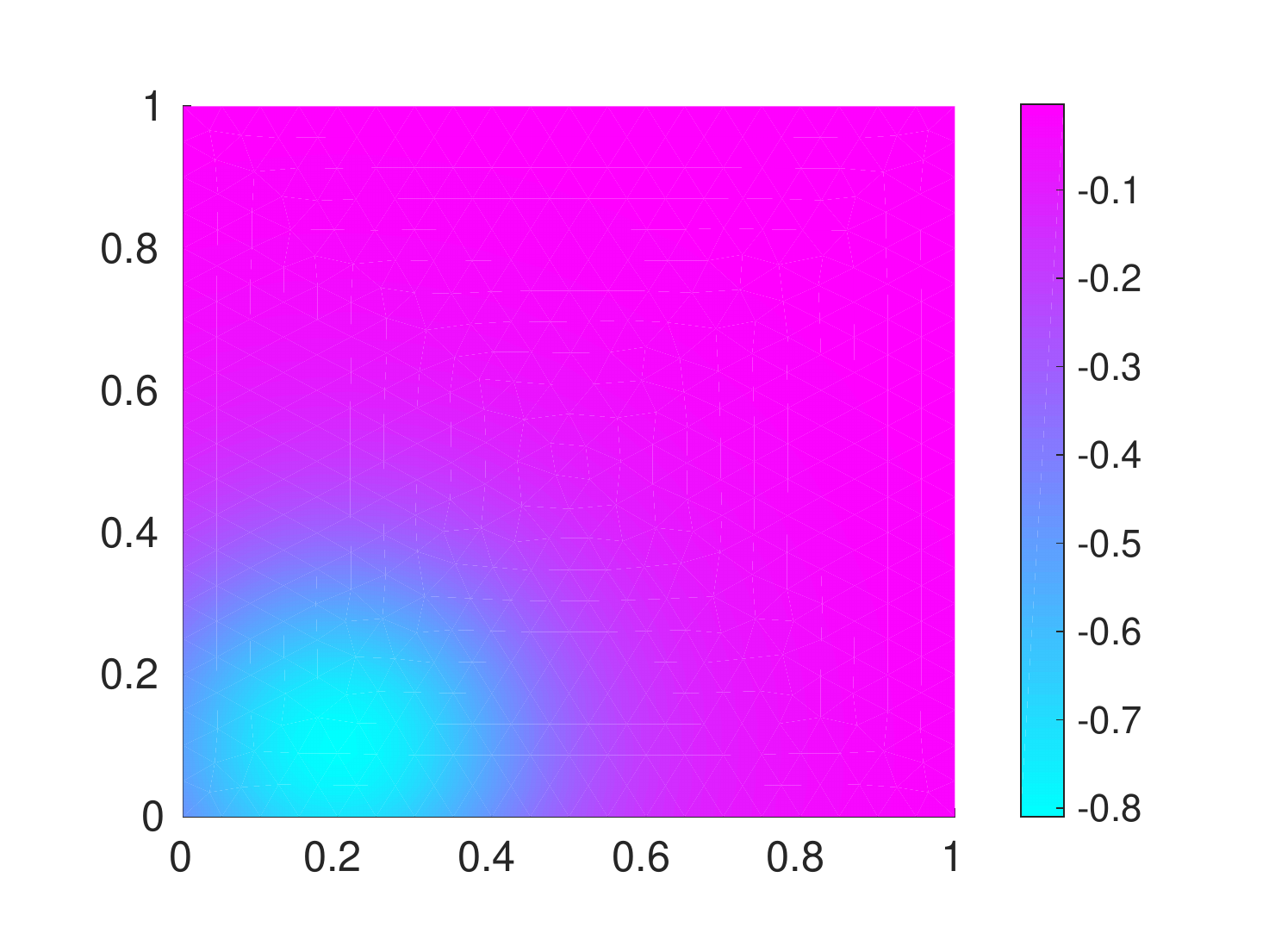}
		\caption{$(\partial/\partial\nu)|Du_1|^2$}
	\end{subfigure}
	~ 
	\begin{subfigure}[t]{0.3\textwidth}
	\centering
		\includegraphics[height=1.25in]{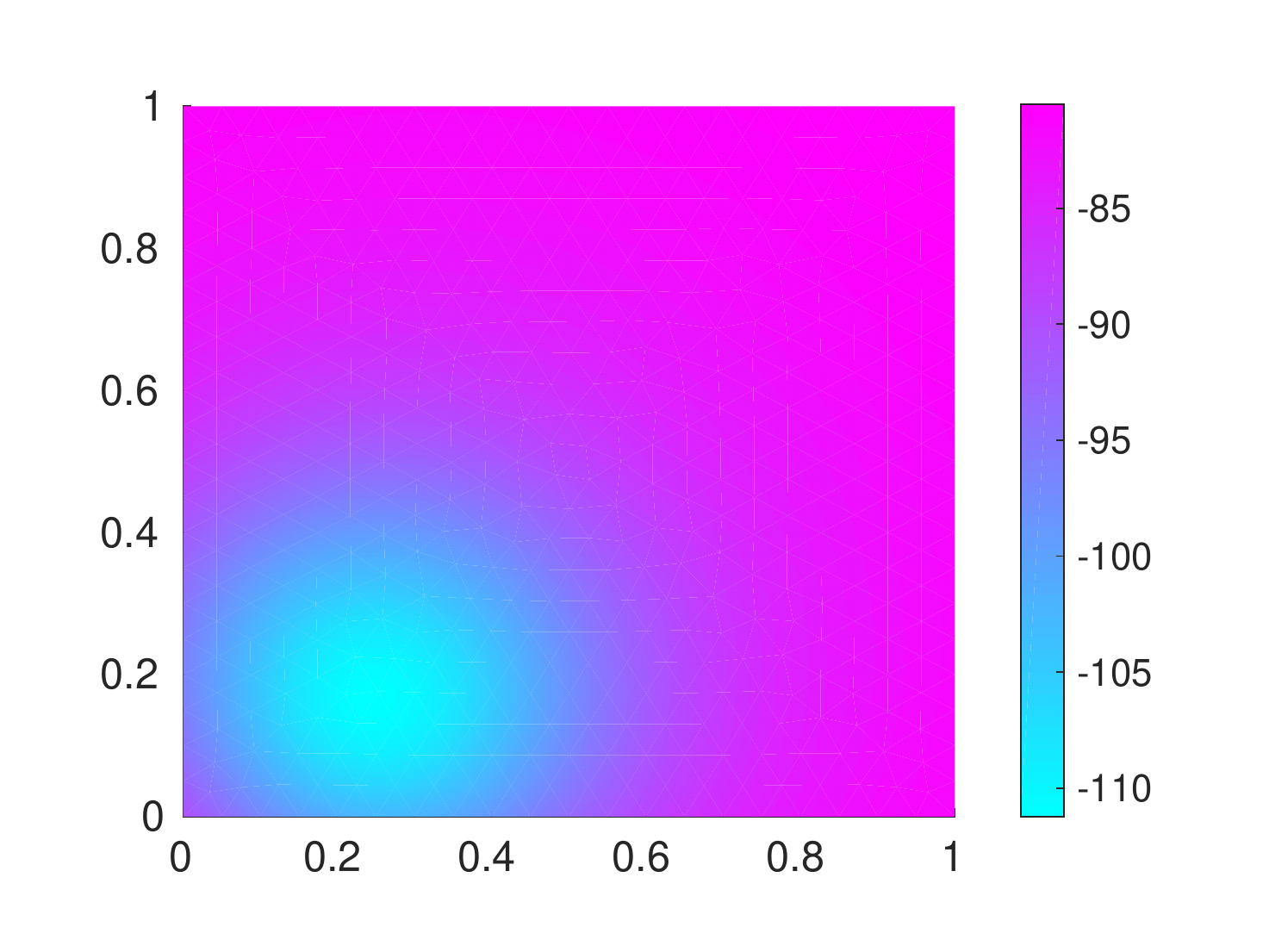}
		\caption{$(\partial/\partial\nu)|D(u_0+u_1)|^2$}
	\end{subfigure}
	\caption{Plots of the potentials and the corresponding augmented data}
	\label{Fig:upq}
\end{figure}

We apply the finite element method as it is implemented in the PDE Toolbox of Matlab. Being in divergence form, equation ~\eqref{mielofon} admits a weak formulation:
\[\int_\Omega\frac{Du\cdot Dv}{\sqrt{p-|Du|^2}}+\frac12\frac{\sigma qv}{p-|Du|^2}\,dx=0\quad\text{for }v\in C^1_0(\Omega).\]
This equation is discretized using the linear elements. We compute the solution for the values of the target maximum mesh edge length of the triangular elements $H_{max}$ ranging from 0.025 to 0.2\,. The stock solver in the PDE Toolbox uses the Gauss-Newton iteration which requires an initial guess. On the coarsest mesh, we solve the two-dimensional Laplace equation with the same Dirichlet boundary values as in the non-linear problem and supply the resulting harmonic function as an initial guess to the non-linear solver. For successive mesh refinements, the initial guess is an interpolation of the solution from the previous mesh. The estimates of the $L^2$- and $H^1$-norms of the errors are based on the comparison with the exact solution.  They are summarized along with the estimated rates of convergence in Table~\ref{errRates}. The log-log plots of these errors appear in Figure~\ref{errorPlots}. 
\begin{center}
\begin{table}[thb]
\begin{tabular}{c|c|c|c|c|c|c}
& \multicolumn{2}{|c|}{$u_0$} & \multicolumn{2}{|c|}{$u_1$} & \multicolumn{2}{|c}{$u_0+u_1$}\\\hline
$H_{max}$ & $L^2$ & $H^1$ & $L^2$ & $H^1$ & $L^2$ & $H^1$\\\hline
0.2 & 1.58e-03 & 2.04e-02 & 3.00e-04 & 2.08e-03 & 1.56e-03 & 2.17e-02\\
0.1 & 2.84e-04 & 7.90e-03 & 8.59e-05 & 1.01e-03 & 2.86e-04 & 8.42e-03\\
0.05 & 6.40e-05 & 3.50e-03 & 2.13e-05 & 4.05e-04 & 6.63e-05 & 3.73e-03\\
0.025 & 1.12e-05 & 1.23e-03 & 5.31e-06 & 1.53e-04 & 1.29e-05 & 1.32e-03\\\hline
Rate & 2.3562 & 1.3302 & 1.9470 & 1.2602 & 2.2858 & 1.3290
\end{tabular}
\caption{Estimated errors and rates of convergence}
\label{errRates}
\end{table}
\end{center}
\begin{figure}[bht]
\includegraphics[width=0.8\textwidth]{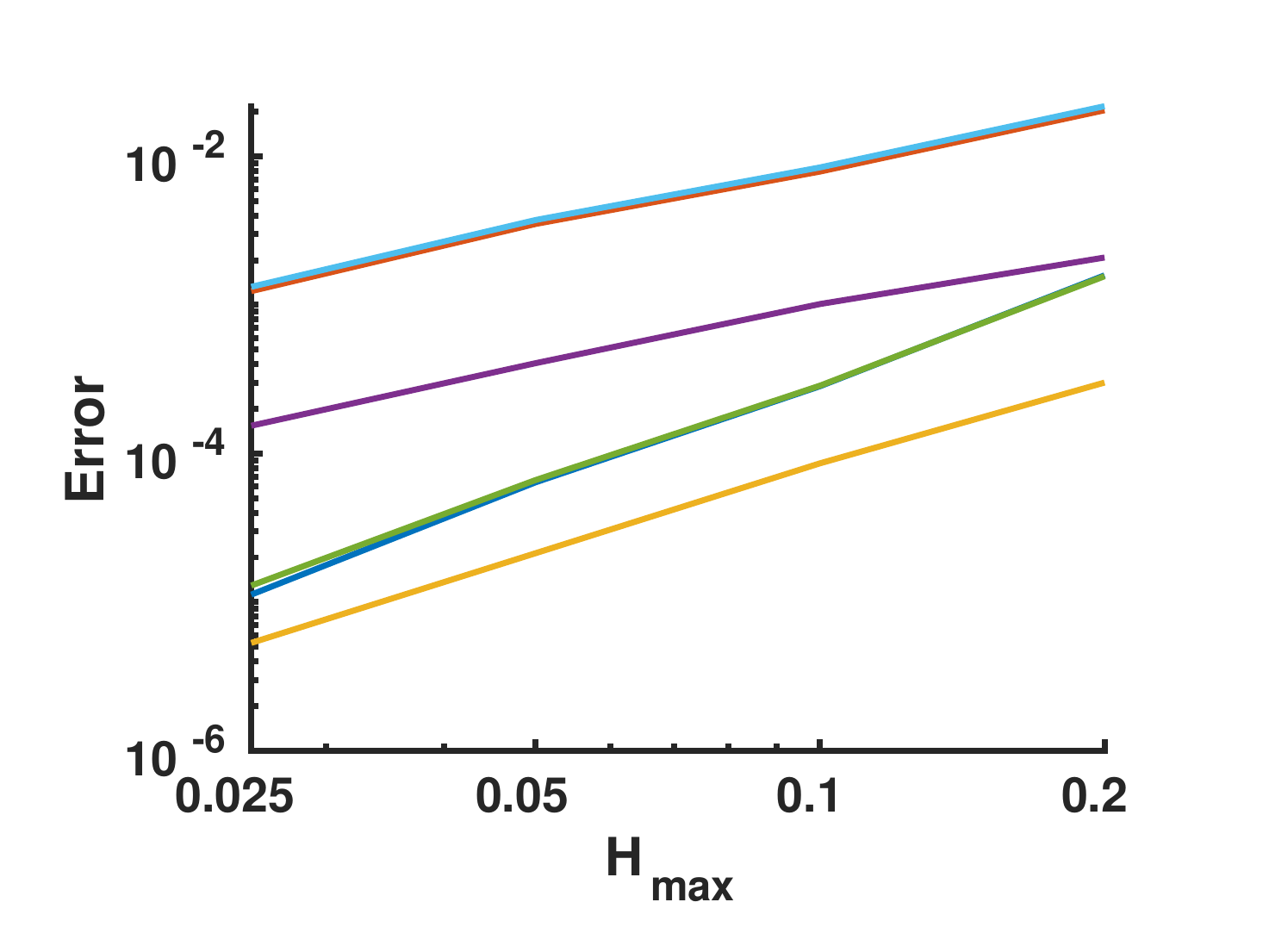}
\caption{Log-log plots of errors}
\label{errorPlots}
\end{figure}

\section*{Conclusions}
The Backus problem is not well-posed and requires extra conditions to guarantee uniqueness. We consider this problem with additional data that is in the same paradigm as the non-linear boundary values in the original formulation of the problem. We show how the expanded data may be used to estimate the number of sources in the corresponding inverse source problem on the plane. In addition, we derive a non-linear equation in terms of the augmented data that is satisfied by the harmonic function on a manifold of lower dimension. We demonstrate that the resulting equation is amenable to numerical solution by standard methods and we recover some solutions using commonly available software.
\bibliography{bibliography.bib}
\end{document}